\newcommand{\Mod}[1]{\ (\mathrm{mod}\ #1)}
\newtheoremstyle{mytheoremstyle} 
    {10pt}                    
    {10pt}                    
    {\normalfont}                   
    {}                           
    {\bfseries}                   
    {.}                          
    {0.3cm}                       
    {}  
\theoremstyle{mytheoremstyle}
\theoremstyle{plain}
\newtheorem{theorem}{Theorem}[section]
\newtheorem{lemma}[theorem]{Lemma}
\newtheorem{proposition}[theorem]{Proposition}
\newtheorem{example}[theorem]{Example}
\begin{document}
\title{Quasi-orthogonal extension of symmetric matrices}
	\author[,1]{Abderrahim Boussa\"{\i}ri\thanks{Corresponding author, email: aboussairi@hotmail.com}}  
	\author[1]{Brahim Chergui}
	\author[1]{Zaineb Sarir}
	\author[2]{Mohamed Zouagui}

\affil[1]{Laboratoire Math\'ematiques Fondamentales et Appliqu\'ees, Facult\'e des Sciences A\"in Chock, Hassan II University of Casablanca, Morocco}

\affil[2]{Laboratoire de Recherche Math\'ematiques et Sciences de l'Ing\'enieur MSI, International University of Casablanca,
	School of Engineering, 
	Casa Green Town,
	Bouskoura, Morocco}

\maketitle

\begin{abstract}
	An $n\times n$ real matrix $Q$ is quasi-orthogonal if $Q^{\top}Q=qI_{n}$ for some positive real number $q$. If $M$ is a principal sub-matrix of a quasi-orthogonal matrix $Q$, we say that $Q$ is a quasi-orthogonal extension of $M$. In a recent work, the authors have investigated this notion for the class of real skew-symmetric matrices. Using a different approach, this paper addresses the case of symmetric matrices. 	 
\end{abstract}

\textbf{Keywords:} Symmetric matrix; principal sub-matrix; quasi-orthogonal matrix; Seidel matrix.

\textbf{MSC Classification:}  15A18; 15B10.

\section{Introduction}

All the matrices considered in this paper are real. The identity matrix of order $n$ and the $n\times m$ all-zeros matrix are respectively denoted by $I_{n}$ and $O_{n,m}$. We omit the subscript when the order is understood. The characteristic polynomial of an $n\times n$ matrix $A$ is $\phi_{A}(x):=\det(xI_{n}-A)$. 

An $n\times n$ real matrix $Q$ is \emph{quasi-orthogonal} if $Q^{\top}Q=qI_{n}$, for some positive real number $q$ or equivalently, the matrix $\frac{1}{\sqrt{q}}Q$ is orthogonal. Special classes of quasi-orthogonal matrices are the set of Hadamard matrices and conference matrices. Recall that a \emph{Hadamard matrix} is a square matrix with entries in $\{-1,1\}$ and whose columns are mutually orthogonal. The order of such matrices must be 1, 2 or a multiple of 4. A \emph{conference matrix} is an $n\times  n$ matrix $C$ with 0 on the diagonal and $\pm 1$ off the diagonal such that $C^{\top}C=(n-1)I_{n}$. Hadamard and conference matrices have been extensively studied since they are related to many combinatorial problems. We refer to the Handbook of Combinatorial Designs \cite{ch2007handbook} for relevant background. 
Taussky \cite{taussky1971sums} suggested the following generalization of Hadamard and conference matrices.
A \emph{weighing matrix} of weight $k$ and order $n$ is an $n \times n$ $\{-1,0,1\}$ matrix $A$ such that $AA^{\top} = kI_{n}$. Recent results on these topics can be found in \cite{koukouvinos1999new}.

Let $M$ be a square matrix. If $M$ is a principal sub-matrix of a quasi-orthogonal matrix $Q$, we say that $Q$ is a \emph{quasi-orthogonal extension} of $M$. Any symmetric (resp. skew-symmetric) matrix with $0$ in the diagonal and $\pm 1$ off the diagonal is a principal sub-matrix of a symmetric (resp. skew-symmetric) conference matrix. These facts follow from \cite[Theorem 3]{bollobas1981graphs} and \cite{graham1971constructive}. 	In \cite{boussairi2024quasi}, the authors proved that every skew-symmetric matrix has a skew-symmetric quasi-orthogonal extension.

This paper deals with the real symmetric matrices. Let $M$ be a symmetric matrix. We prove in Section \ref{sectionExistance} that  $M$ has a symmetric quasi-orthogonal extension. 
We define the \emph{quasi-orthogonality index} of $M$ as the least integer $d$, denoted by ${\rm ind}(M)$, such that $M$ has a symmetric quasi-orthogonal extension of order $n+d$.

The remainder of the paper is organized as follows.
In Section \ref{sectionIndex}, we work out the quasi-orthogonality index using the well-known Cauchy's interlace theorem. Section \ref{sectionSeidel} is devoted to $n\times n$ Seidel matrices with minimum quasi-orthogonality index.

\section{Existence of symmetric quasi-orthogonal extension of symmetric matrices}\label{sectionExistance}
Let $S$ be a non-zero square symmetric matrix. For a real number $\lambda$, we denote by $\mu_{\lambda}(S)$ (or simply by $\mu_{\lambda}$) its algebraic multiplicity in $S$ if $\lambda$ is an eigenvalue of $S$. We convene that $\mu_{\lambda}=0$ whenever $\lambda$ is not an eigenvalue of $S$.

The next theorem guarantees the existence of symmetric quasi-orthogonal extension of symmetric matrices. 
\begin{theorem}\label{existance}
	Let $S$ be an $n\times n$ non-zero real symmetric matrix with spectral radius $\rho$. Then $S$ has a symmetric quasi-orthogonal extension $\hat{S}$ of order $2n-\mu_{\rho}-\mu_{-\rho}$.
\end{theorem}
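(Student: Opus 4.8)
The plan is to exploit the fact that for a symmetric matrix the quasi-orthogonality condition $\hat S^{\top}\hat S=qI$ reads $\hat S^{2}=qI$, so that a symmetric quasi-orthogonal matrix is precisely a symmetric matrix all of whose eigenvalues lie in $\{-\sqrt q,\sqrt q\}$. Since $S$ is non-zero its spectral radius satisfies $\rho>0$, and every eigenvalue of $S$ lies in $[-\rho,\rho]$; the natural choice is therefore $q=\rho^{2}$, so that I aim to embed $S$ as a principal sub-matrix of a symmetric matrix whose only eigenvalues are $\pm\rho$.

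Concretely, with $m:=n-\mu_{\rho}-\mu_{-\rho}$ I would look for $\hat S$ in block form
\[
\hat S=\begin{pmatrix} S & B\\ B^{\top} & C\end{pmatrix},
\]
where $B$ is $n\times m$ and $C=C^{\top}$ is $m\times m$, and impose $\hat S^{2}=\rho^{2}I_{n+m}$. Expanding the square yields the three conditions $S^{2}+BB^{\top}=\rho^{2}I_{n}$, $SB+BC=O$, and $B^{\top}B+C^{2}=\rho^{2}I_{m}$. The first of these is the real constraint: $\rho^{2}I_{n}-S^{2}$ is positive semidefinite because $|\lambda|\le\rho$ for every eigenvalue $\lambda$ of $S$, and its kernel consists exactly of the eigenvectors for $\pm\rho$, so it has rank $n-\mu_{\rho}-\mu_{-\rho}=m$. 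Hence it factors as $BB^{\top}$ for some $n\times m$ matrix $B$ of full column rank, which is what fixes the order of the extension to be $n+m=2n-\mu_{\rho}-\mu_{-\rho}$.

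To produce $B$ and $C$ simultaneously and to check the remaining two equations, I would diagonalize $S=U\Lambda U^{\top}$ with $U$ orthogonal and $\Lambda=\operatorname{diag}(\rho I_{\mu_{\rho}},-\rho I_{\mu_{-\rho}},D)$, where $D$ collects the $m$ eigenvalues with $|\lambda|<\rho$. Working in this basis reduces the problem to the diagonal matrix $\Lambda$: one takes the transformed off-diagonal block to vanish on the $\pm\rho$ coordinates and to equal $(\rho^{2}I_{m}-D^{2})^{1/2}$ on the remaining ones, and sets $C=-D$. Because all the matrices involved are diagonal they commute, so $SB+BC=O$ and $B^{\top}B+C^{2}=(\rho^{2}I_{m}-D^{2})+D^{2}=\rho^{2}I_{m}$ follow by direct substitution; conjugating back by $U$ gives the desired $B$ and $C$ for $S$ itself.

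The step I expect to carry the weight is the simultaneous consistency of the three block equations: it is easy to solve $BB^{\top}=\rho^{2}I_{n}-S^{2}$ alone, but one must choose the factorization $B$ so that a symmetric $C$ exists satisfying both $BC=-SB$ and $B^{\top}B+C^{2}=\rho^{2}I_{m}$. Passing to the eigenbasis of $S$ is precisely what makes this transparent, turning the matrix identities into scalar ones and forcing $C=-D$. The only points needing care are that $\rho>0$ (so $q=\rho^{2}$ is genuinely positive) and that the rank of $\rho^{2}I_{n}-S^{2}$ is exactly $m$, which is what makes the claimed order correct.
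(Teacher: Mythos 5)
Your proposal is correct and follows essentially the same route as the paper: both diagonalize $S$, take the off-diagonal block to be (the eigenvector columns for the non-extremal eigenvalues times) $(\rho^{2}I-D^{2})^{1/2}$, set the new corner block to $-D$, and verify $\hat S^{2}=\rho^{2}I$. Your passage to the eigenbasis and conjugation back by $U$ is just a reorganization of the paper's computation with $P=(N\;L)$ and $M=N\sqrt{\rho^{2}I-D^{2}}$.
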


\begin{proof}
	Since the matrix $S$ is real and symmetric, it is diagonalizable, and hence there exists an $n\times n$ orthogonal matrix $P$ such that
	\begin{equation}\label{decomposition}
		P^{\top}SP=\begin{pmatrix}
			D    & O\\
			O  & \Sigma
		\end{pmatrix}
	\end{equation} 
	where $D$ is a diagonal matrix and $\Sigma$ is a diagonal matrix of order $\mu_{\rho}+\mu_{-\rho}$ with diagonal entries $\pm\rho$. 
	Then we have     $$ P^{\top}S^{2}P=\begin{pmatrix}
		D^{2}    & O\\
		O  & \rho^{2}I
	\end{pmatrix}.$$
	We partition the matrix $P$ as $P:=\begin{pmatrix}
		N & L
	\end{pmatrix}$, where $N$ and $L$ are sub-matrices of orders $n\times(n-\mu_{-\rho}-\mu_{\rho})$ and $n\times(\mu_{-\rho}+\mu_{\rho})$ respectively.
	Note that the diagonal matrix $\rho^{2}I-D^{2}$ has positive diagonal entries.
	We set $M:=N.\sqrt{\rho^{2}I-D^{2}}$.
	Let us consider the following extension $\hat{S}$ of $S$
	$$\hat{S}:=\begin{pmatrix}
		S  & M\\
		M^{\top}  & -D
	\end{pmatrix}.$$
	We have
	$$\hat{S}^{2}=\begin{pmatrix}
		S^{2}+MM^{\top}  & SM-MD\\
		M^{\top}S-DM^{\top}  & M^{\top}M+D^{2}
	\end{pmatrix}.$$
	Equality \eqref{decomposition} implies that $SN=ND$. Right multiplying both sides by $\sqrt{\rho^{2}I-D^{2}}$, we get $SM=MD$. Hence
	\begin{equation}\label{eq1}
		SM-MD=M^{\top}S-DM^{\top}=O.
	\end{equation}
	As $P$ is an orthogonal matrix and $P^{\top}P=\begin{pmatrix}
		N^{\top}N    & N^{\top}L\\
		L^{\top}N  & L^{\top}L
	\end{pmatrix}$, we obtain $N^{\top}N=I$.
	It follows that
	$M^{\top}M=\rho^{2}I-D^{2}.$
	Then 
	\begin{equation}\label{eq2}
		M^{\top}M+D^{2}=\rho^{2}I.
	\end{equation}
	Moreover
	\begin{align*}\label{eq1}
		MM^{\top}&=N(\rho^{2}I-D^{2})N^{\top}\\
		&=\rho^{2}NN^{\top}-ND^{2}N^{\top}\\
	\end{align*}
	and 
	\begin{align*}
		S^{2}&=P\begin{pmatrix}
			D^{2}    & O\\
			O  & \rho^{2}I
		\end{pmatrix}P^{\top}\\
		&=\begin{pmatrix}
			N & L
		\end{pmatrix}\begin{pmatrix}
			D^{2}    & O\\
			O  & \rho^{2}I
		\end{pmatrix}\begin{pmatrix}
			N^\top\\
			L^\top
		\end{pmatrix}\\
		&=ND^{2}N^{\top}+\rho^{2}LL^{\top}.
	\end{align*}
	It follows that
	$S^{2}+ MM^{\top} =\rho^{2}(NN^{\top}+LL^{\top})$.
	Given that $PP^{\top}=NN^{\top}+LL^{\top}=I$, we have 
	\begin{equation}\label{eq3}
		S^{2}+ M^{\top}M=\rho^2I.	
	\end{equation}
	From \eqref{eq1}, \eqref{eq2} and \eqref{eq3} we deduce that $\hat{S}$ is a symmetric quasi-orthogonal extension of order $2n-\mu_{-\rho}-\mu_{\rho}$.
\end{proof}
\section{Quasi-orthogonality index of a symmetric matrix}\label{sectionIndex}
The following theorem gives the quasi-orthogonality index for real symmetric matrices.
\begin{theorem}\label{index}
	Let $S$ be an $n\times n$ symmetric matrix with spectral radius $\rho$.
	The quasi-orthogonality index of $S$ is equal to $n-\mu_{\rho}-\mu_{-\rho}$.
\end{theorem}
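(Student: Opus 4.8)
The plan is to prove the claimed value as two matching bounds. The upper bound ${\rm ind}(S)\le n-\mu_{\rho}-\mu_{-\rho}$ is immediate from Theorem \ref{existance}: it produces a symmetric quasi-orthogonal extension of order $2n-\mu_{\rho}-\mu_{-\rho}=n+(n-\mu_{\rho}-\mu_{-\rho})$, so by the definition of the index we get ${\rm ind}(S)\le n-\mu_{\rho}-\mu_{-\rho}$ at once. All the work then goes into the matching lower bound: I must show that every symmetric quasi-orthogonal extension of $S$ has order at least $2n-\mu_{\rho}-\mu_{-\rho}$.

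The starting observation is that a symmetric quasi-orthogonal matrix $Q$ of order $N$ satisfies both $Q^{\top}Q=qI_{N}$ and $Q^{\top}=Q$, hence $Q^{2}=qI_{N}$, so its spectrum is contained in $\{-\sqrt{q},\sqrt{q}\}$. Ordering the eigenvalues increasingly, there is an integer $p$ with $\lambda_{1}(Q)=\cdots=\lambda_{p}(Q)=-\sqrt{q}$ and $\lambda_{p+1}(Q)=\cdots=\lambda_{N}(Q)=\sqrt{q}$. Since $S$ is a principal sub-matrix of the symmetric matrix $Q$, all eigenvalues of $S$ lie in $[\lambda_{1}(Q),\lambda_{N}(Q)]=[-\sqrt{q},\sqrt{q}]$; in particular $\rho\le\sqrt{q}$.

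Next I would apply Cauchy's interlace theorem to $S$ as a principal sub-matrix of $Q$, with $d:=N-n$: for every $i\in\{1,\ldots,n\}$,
$$\lambda_{i}(Q)\le\lambda_{i}(S)\le\lambda_{i+d}(Q).$$
Call an eigenvalue of $S$ \emph{interior} if its absolute value is strictly less than $\rho$; there are exactly $m:=n-\mu_{\rho}-\mu_{-\rho}$ of them, occupying the consecutive indices $\mu_{-\rho}+1,\ldots,n-\mu_{\rho}$. For such an index $i$ one has $-\sqrt{q}<\lambda_{i}(S)<\sqrt{q}$ strictly, using $\rho\le\sqrt{q}$. The left interlacing inequality then rules out $\lambda_{i}(Q)=\sqrt{q}$, forcing $i\le p$; the right one rules out $\lambda_{i+d}(Q)=-\sqrt{q}$, forcing $i\ge p-d+1$. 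Hence every interior index lies in the window $\{p-d+1,\ldots,p\}$, a set of $d$ integers.

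The conclusion is a pigeonhole count: the $m$ distinct interior indices all fit inside a set of size $d$, so $m\le d=N-n$, that is $N\ge 2n-\mu_{\rho}-\mu_{-\rho}$. This is precisely the lower bound, and combined with the upper bound from Theorem \ref{existance} it gives ${\rm ind}(S)=n-\mu_{\rho}-\mu_{-\rho}$. The main obstacle—really the only nontrivial idea—is recognizing that symmetric quasi-orthogonality collapses the spectrum of $Q$ onto $\pm\sqrt{q}$; after that one must only read the interlacing inequalities with the correct strict/non-strict care (the degenerate case $m=0$, where $S$ is already quasi-orthogonal, being automatic).
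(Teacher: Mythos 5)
Your proof is correct, and while it rests on the same two pillars as the paper's (the extension from Theorem \ref{existance} for the upper bound, and Cauchy interlacing applied to $S$ inside a two-valued-spectrum matrix for the lower bound), the way you extract the lower bound is genuinely different and cleaner. The paper routes the lower bound through Lemma \ref{f(s)}, which carries the auxiliary hypothesis $n-m\leq(n-1)/2$; to apply it, the paper must first invoke the upper bound from Theorem \ref{existance} just to verify that hypothesis, and the lemma's proof then splits into two cases according to whether $n-m\geq\mu_{\hat{\rho}}(\hat{S})$, along the way also proving that the spectral radius of the extension equals $\rho$. Your argument dispenses with all of that: you observe that each of the $m=n-\mu_{\rho}-\mu_{-\rho}$ "interior" indices $i$ is squeezed by the two interlacing inequalities into the window $\{p-d+1,\dots,p\}$ of size $d=N-n$, and conclude $m\leq d$ by pigeonhole, with no hypothesis on $d$, no case split, and no need to identify the spectral radius of the extension. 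The one point to state a little more carefully is the derivation of $\rho\leq\sqrt{q}$: rather than writing $[\lambda_{1}(Q),\lambda_{N}(Q)]=[-\sqrt{q},\sqrt{q}]$ (which silently assumes both signs occur in the spectrum of $Q$), just note that interlacing gives $\lambda_{1}(Q)\leq\beta_{i}\leq\lambda_{N}(Q)$ and both endpoints lie in $\{-\sqrt{q},\sqrt{q}\}\subseteq[-\sqrt{q},\sqrt{q}]$; the degenerate cases $p=0$ or $p=N$ then cause no trouble, since the window is still a set of at most $d$ integers. With that cosmetic fix your argument is a complete and arguably preferable substitute for Lemma \ref{f(s)} in the proof of Theorem \ref{index}.
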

To prove this theorem, we need the following results.
\begin{theorem}[Cauchy's interlace theorem]\label{interlacing}
	Let $A$ be an $n\times n$ symmetric matrix and let $B$ be a principal sub-matrix of order $m<n$. Suppose $A$ has eigenvalues $\lambda_{1}\leq\lambda_{2}\leq\cdots\leq\lambda_{n}$ and $B$ has eigenvalues $\beta_{1}\leq\beta_{2}\leq\cdots\leq\beta_{m}$. Then
	$$\lambda_{k}\leq\beta_{k}\leq\lambda_{k+n-m} \ \ \ \text{for} \ \ k=1,\dots,m,$$
	in particular, if $m=n-1$, we have
	\[\lambda_{1}\leq\beta_{1}\leq\lambda_{2}\leq\beta_{2}\leq\dots\leq\beta_{n-1}\leq\lambda_{n}.
	\]	
\end{theorem}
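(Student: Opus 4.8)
The plan is to derive both inequalities from the Courant--Fischer min--max characterization of the eigenvalues of a symmetric matrix, which I will take as a known tool. Writing $R_{A}(x):=\frac{x^{\top}Ax}{x^{\top}x}$ for the Rayleigh quotient of a symmetric matrix $A$ of order $n$ with eigenvalues $\lambda_{1}\leq\cdots\leq\lambda_{n}$, recall that one has the two dual formulas
\begin{equation*}
\lambda_{k}=\min_{\substack{V\subseteq\mathbb{R}^{n}\\ \dim V=k}}\ \max_{0\neq x\in V}R_{A}(x)=\max_{\substack{V\subseteq\mathbb{R}^{n}\\ \dim V=n-k+1}}\ \min_{0\neq x\in V}R_{A}(x).
\end{equation*}

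First I would reduce to a convenient normal form. Since a principal sub-matrix is obtained by deleting a set of rows together with the corresponding columns, applying a permutation similarity $A\mapsto\Pi^{\top}A\Pi$ (which is symmetric and preserves the spectrum) lets me assume that $B$ is the leading $m\times m$ block of $A$. Let $W\subseteq\mathbb{R}^{n}$ be the coordinate subspace spanned by the first $m$ standard basis vectors; identifying $\mathbb{R}^{m}$ with $W$ by appending zeros, I have $R_{B}(y)=R_{A}(x)$ whenever $y\in\mathbb{R}^{m}$ corresponds to $x\in W$, because the cross terms in $x^{\top}Ax$ involve only the vanishing coordinates. Thus the min--max formulas for $B$ are exactly the formulas for $A$ with the optimization restricted to subspaces contained in $W$.

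For the lower bound I would use the min--max form. Since every $k$-dimensional subspace of $W$ is in particular a $k$-dimensional subspace of $\mathbb{R}^{n}$, minimizing over the smaller family can only increase the value, giving
\begin{equation*}
\lambda_{k}=\min_{\substack{V\subseteq\mathbb{R}^{n}\\ \dim V=k}}\max_{0\neq x\in V}R_{A}(x)\ \leq\ \min_{\substack{V\subseteq W\\ \dim V=k}}\max_{0\neq x\in V}R_{A}(x)=\beta_{k}.
\end{equation*}
For the upper bound I would use the max--min form, choosing the dimension so that the $A$-side index lands on $k+n-m$: optimizing over subspaces of dimension $m-k+1$ yields $\beta_{k}$ on the $B$ side and $\lambda_{k+n-m}$ on the $A$ side, since $n-(k+n-m)+1=m-k+1$. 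As $\{V\subseteq W:\dim V=m-k+1\}$ is contained in $\{V\subseteq\mathbb{R}^{n}:\dim V=m-k+1\}$, maximizing over the smaller family can only decrease the value, so $\beta_{k}\leq\lambda_{k+n-m}$.

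The two inequalities together are the claimed interlacing, and the special case $m=n-1$ follows by setting $n-m=1$ and chaining $\lambda_{k}\leq\beta_{k}\leq\lambda_{k+1}$ for $k=1,\dots,n-1$. There is no deep obstacle here: the whole content is the observation that passing to a principal sub-matrix amounts to restricting the Rayleigh quotient to a coordinate subspace, so the extremal problems defining the $\beta_{k}$ range over subfamilies of those defining the $\lambda_{j}$. The only point demanding care is the bookkeeping for the upper bound --- one must invoke the max--min (rather than the min--max) characterization and pick the subspace dimension $m-k+1$ precisely so that the $A$-eigenvalue index comes out to $k+n-m$ instead of $k$.
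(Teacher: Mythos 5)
Your proof is correct: the two applications of Courant--Fischer are properly dualized, the dimension bookkeeping ($m-k+1$ on both sides of the max--min comparison, so that the $A$-index lands on $k+n-m$) is right, and the reduction to a coordinate subspace via permutation similarity is legitimate. Note, however, that the paper does not prove this statement at all --- it quotes Cauchy's interlace theorem as a known classical result and only uses it (in Lemma \ref{f(s)}) --- so there is no internal proof to compare against; what you have written is the standard min--max proof of this classical theorem, and it is sound.
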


\begin{lemma}\label{f(s)}
	Let $S$ be a real symmetric matrix of order $m$ with spectral radius $\rho$.
	Let $\hat{S}$ be a symmetric quasi-orthogonal extension of order $n\geq 2$, with spectral radius $\hat{\rho}$. 
	If $n-m\leq (n-1)/2$, then $\hat{\rho}=\rho$ and $n\geq  2m-(\mu_{\rho}(S)+\mu_{-\rho}(S))$. 
\end{lemma}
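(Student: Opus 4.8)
The plan is to exploit the single structural fact that makes symmetric quasi-orthogonal matrices rigid: since $\hat{S}$ is symmetric and $\hat{S}^{\top}\hat{S}=qI_{n}$, we have $\hat{S}^{2}=qI_{n}$, so the only possible eigenvalues of $\hat{S}$ are $\pm\hat{\rho}$ with $\hat{\rho}=\sqrt{q}$. Writing $a$ and $b$ for the multiplicities of $+\hat{\rho}$ and $-\hat{\rho}$ respectively, we have $a+b=n$, and in increasing order the spectrum of $\hat{S}$ is $\lambda_{1}=\cdots=\lambda_{b}=-\hat{\rho}$ and $\lambda_{b+1}=\cdots=\lambda_{n}=\hat{\rho}$. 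Let $\beta_{1}\leq\cdots\leq\beta_{m}$ denote the eigenvalues of $S$; since $S$ is a principal sub-matrix of $\hat{S}$ of order $m<n$, Cauchy's interlace theorem (Theorem \ref{interlacing}) gives $\lambda_{k}\leq\beta_{k}\leq\lambda_{k+n-m}$ for $k=1,\dots,m$. In particular every $\beta_{k}$ lies in $[-\hat{\rho},\hat{\rho}]$, so $\rho\leq\hat{\rho}$ holds for free.

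For the first claim $\hat{\rho}=\rho$, it remains to show $\rho\geq\hat{\rho}$. I first note the elementary equivalence that the hypothesis $n-m\leq(n-1)/2$ is the same as $n\leq 2m-1$, and then split on the multiplicities. If $b<m$ then $\lambda_{m}=\hat{\rho}$, and interlacing gives $\beta_{m}\geq\lambda_{m}=\hat{\rho}$; combined with $\beta_{m}\leq\hat{\rho}$ this forces $\beta_{m}=\hat{\rho}$, whence $\rho\geq\hat{\rho}$. Symmetrically, if $a<m$ then $1+n-m\leq b$, so $\lambda_{1+n-m}=-\hat{\rho}$, and interlacing gives $\beta_{1}\leq-\hat{\rho}$, hence $\beta_{1}=-\hat{\rho}$ and again $\rho\geq\hat{\rho}$. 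Finally, $a\geq m$ and $b\geq m$ simultaneously would force $n=a+b\geq 2m$, contradicting $n\leq 2m-1$; hence at least one of $a<m$, $b<m$ must hold, and $\hat{\rho}=\rho$.

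For the second claim I would now use $\hat{\rho}=\rho$ to count how many eigenvalues of $S$ are pinned to the endpoints. For each index $k\geq b+1$ we have $\lambda_{k}=\rho$, so $\beta_{k}\geq\lambda_{k}=\rho$ together with $\beta_{k}\leq\rho$ forces $\beta_{k}=\rho$; this produces at least $m-b$ eigenvalues of $S$ equal to $\rho$, so $\mu_{\rho}(S)\geq m-b$. Dually, for each $k\leq m-a$ we have $k+n-m\leq b$, hence $\lambda_{k+n-m}=-\rho$ and $\beta_{k}=-\rho$, giving $\mu_{-\rho}(S)\geq m-a$. Adding the two inequalities yields $\mu_{\rho}(S)+\mu_{-\rho}(S)\geq (m-b)+(m-a)=2m-n$, which rearranges to $n\geq 2m-\mu_{\rho}(S)-\mu_{-\rho}(S)$, as required.

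The conceptual crux is the opening observation that $\hat{S}^{2}=qI_{n}$ collapses the spectrum of any symmetric quasi-orthogonal extension onto $\{\pm\hat{\rho}\}$; everything afterward is bookkeeping with Cauchy interlacing. The only genuinely delicate point is the role of the size hypothesis: it enters exactly once, as the pigeonhole step $n\leq 2m-1$ that rules out $a,b\geq m$ and thereby pins $\hat{\rho}=\rho$. I would expect the main care to be needed in tracking the interlacing indices (the $k+n-m$ shift and the cutoffs $k=b+1$ and $k=m-a$) correctly, since an off-by-one there would break both the squeeze and the final count.
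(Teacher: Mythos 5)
Your proof is correct and follows essentially the same route as the paper: Cauchy interlacing combined with the observation that $\hat{S}^{2}=qI_{n}$ collapses the spectrum of $\hat{S}$ onto $\{\pm\hat{\rho}\}$, then a count of the eigenvalues of $S$ squeezed to $\pm\hat{\rho}$. The only difference is organizational — by bounding $\mu_{\rho}(S)\geq m-b$ and $\mu_{-\rho}(S)\geq m-a$ symmetrically you avoid the paper's WLOG assumption and two-case split, which is a slightly cleaner write-up of the same argument.
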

\begin{proof}
	Let $\lambda_{1}\leq\lambda_{2}\leq\cdots\leq\lambda_{n}$ and $\beta_{1}\leq\beta_{2}\leq\cdots\leq\beta_{m}$ respectively be the eigenvalues of $\hat{S}$ and $S$.	
	We set $f(S):=m-(\mu_{\rho}(S)+\mu_{-\rho}(S))$.
	Without loss of generality we can assume that $\mu_{-\hat{\rho}}(\hat{S})\geq \mu_{\hat{\rho}}(\hat{S})$. We will distinguish two cases:
	
	$\bullet$ Suppose that $n-m\geq\mu_{\hat{\rho}}(\hat{S})$. As $\hat{S}$ is a symmetric quasi-orthogonal matrix, $\mu_{\hat{\rho}}(\hat{S})+\mu_{-\hat{\rho}}(\hat{S})=n$. It follows that
	$$2\mu_{-\hat{\rho}}(\hat{S})\geq n.$$
	Hence
	$$\mu_{-\hat{\rho}}(\hat{S})>\dfrac{n-1}{2}\geq n-m.$$
	Let $k\in\{1,\dots,\mu_{-\hat{\rho}}(\hat{S})-(n-m)\}$. As $k+n-m\leq \mu_{-\hat{\rho}}(\hat{S})$, we have $\lambda_{k}=\lambda_{k+n-m}=-\hat{\rho}$.
	By Theorem \ref{interlacing}, we get $\beta_{k}=-\hat{\rho}$. Then $\hat{\rho}=\rho$. Moreover, we have
	$$f(S)\leq m-(\mu_{-\hat{\rho}}(\hat{S})-(n-m)).$$
	Then
	$$f(S)\leq n-\mu_{-\hat{\rho}}(\hat{S}).$$
	Hence
	$$f(S)\leq \mu_{\hat{\rho}}(\hat{S})\leq n-m.$$
	Consequently, $n\geq  2m-(\mu_{\rho}(S)+\mu_{-\rho}(S))$.
	
	$\bullet$ Suppose that $n-m<\mu_{\hat{\rho}}(\hat{S})$, then $n-m<\mu_{-\hat{\rho}}(\hat{S})$. 
	
	If $k\in\{1,\dots,\mu_{-\hat{\rho}}(\hat{S})-(n-m)\}$, then we have $k+n-m\leq \mu_{-\hat{\rho}}(\hat{S})$ and $\lambda_{k}=\lambda_{k+n-m}=-\hat{\rho}$. It follows from Theorem  \ref{interlacing} that $\beta_{k}=-\hat{\rho}$ for $k\in\{1,\dots,\mu_{-\hat{\rho}}(\hat{S})-(n-m)\}$. 
	
	If $k\in\{\mu_{-\hat{\rho}}(\hat{S})+1,\dots,m\}$, then $\lambda_{k}=\lambda_{k+n-m}=\hat{\rho}$ and hence $\beta_{k}=\hat{\rho}$. Therefore, $f(S)\leq m-[\mu_{-\hat{\rho}}(\hat{S})-(n-m)+m-\mu_{-\hat{\rho}}(\hat{S})]=n-m$. Consequently, $\hat{\rho}=\rho$ and $n\geq  2m-(\mu_{\rho}(S)+\mu_{-\rho}(S))$.
\end{proof}

\begin{proof}[Proof of Theorem \ref{index}]
	Let $\hat{S}$ be  a minimal symmetric quasi-orthogonal extension of the matrix $S$ with order $\hat{n}$. 
	By Theorem \ref{existance}, $S$ has a symmetric quasi-orthogonal extension of order $2n-\mu_{\rho}(S)-\mu_{-\rho}(S)$. Then
	$$\hat{n}\leq2n-\mu_{\rho}(S)-\mu_{-\rho}(S).$$
	It follows that
	$$\hat{n}-2n\leq-\mu_{\rho}(S)-\mu_{-\rho}(S)\leq -1,$$
	and hence
	$$\hat{n}-n\leq\dfrac{\hat{n}-1}{2}.$$
	By Lemma \ref{f(s)}, we have $\hat{n}\geq2n-\mu_{\rho}(S)-\mu_{-\rho}(S)$. Thus $\hat{n}=2n-\mu_{\rho}(S)-\mu_{-\rho}(S)$.	
\end{proof}

\begin{example} \label{exampleordr3}
	We will use the proof of Theorem \ref{existance} to find a minimal symmetric quasi-orthogonal extension of the matrix
	$$S=
	\begin{pmatrix}
		0 & 1 & 1\\
		1 & 0 & 1\\
		1 & 1 & 0
	\end{pmatrix}.$$
	The spectrum of $S$ is $\{[-1]^{2}, [2]^{1}\}$. By Theorem \ref{index}, we have ${\rm ind}(S)=2$.
	Let $$P=\begin{pmatrix}
		\frac{-1}{\sqrt{2}} & \frac{1}{\sqrt{6}} &	\frac{1}{\sqrt{3}}\\
		\frac{1}{\sqrt{2}} & \frac{1}{\sqrt{6}}& 	\frac{1}{\sqrt{3}}\\
		0 & \frac{-2}{\sqrt{6}} & 	\frac{1}{\sqrt{3}}
	\end{pmatrix}.$$ 
	The matrix $P$ is orthogonal and
	$$P^\top SP=
	\begin{pmatrix*}[r]
		-1 & 0 & 0\\
		0 & -1 & 0\\
		0 & 0 & 2
	\end{pmatrix*}.$$
	Following the proof of Theorem \ref{existance}, we have 
	$N=\begin{pmatrix}
		\frac{-1}{\sqrt{2}} & \frac{1}{\sqrt{6}}\\
		\frac{1}{\sqrt{2}} & \frac{1}{\sqrt{6}}\\
		0 & \frac{-2}{\sqrt{6}}%
	\end{pmatrix}$, 
	$L=\begin{pmatrix}
		\frac{1}{\sqrt{3}}\\
		\frac{1}{\sqrt{3}}\\
		\frac{1}{\sqrt{3}}%
	\end{pmatrix}$,
	$D=\begin{pmatrix*}[r]
		-1 & 0\\
		0 & -1
	\end{pmatrix*}$
	and $M=N.\sqrt{4I-D^{2}}$.
	A minimal symmetric quasi-orthogonal extension of $S$ is then 
	$$\hat{S}=
	\begin{pmatrix*}[r]
		0 & 1 & 1 & -\frac{1}{2}\sqrt{6} & \frac{1}{2}\sqrt{2}\\
		1 & 0 & 1 & \frac{1}{2}\sqrt{6} & \frac{1}{2}\sqrt{2}\\
		1 & 1 & 0 & 0 & -\sqrt{2} \\
		-\frac{1}{2}\sqrt{6} & \frac{1}{2}\sqrt{6} & 0 & 1 & 0\\
		\frac{1}{2}\sqrt{2} & \frac{1}{2}\sqrt{2} & -\sqrt{2} & 0 & 1
	\end{pmatrix*}.$$
\end{example}

\section{Symmetric Seidel matrices with minimum quasi-orthogonality index}\label{sectionSeidel}
Following \cite{greaves2017symmetric}, a Seidel matrix is a $\{0,\pm1\}$-matrix $S$ with zero diagonal and all off-diagonal entries nonzero such that $S = \pm S^{\top}$. Symmetric Seidel matrices were introduced in \cite{van1991equilateral} in connection with equiangular lines in Euclidean spaces \cite{haantjes1948equilateral}.

Let $S$ be a symmetric Seidel matrix of order $n$ with spectrum $\{[\theta_{1}]^{m_{1}},\dots, [\theta_{r}]^{m_{r}}\}$. Then
\begin{align}
	&\sum_{i=1}^{r}m_{i}=n,\\
	&\sum_{i=1}^{r}m_{i}\theta_{i}=0,\label{trace}\\
	&\sum_{i=1}^{r}m_{i}\theta_{i}^{2}=n(n-1).\label{symmetricMatrix}
\end{align}
Two Seidel matrices $S_1$ and $S_2$ are said to be \emph{equivalent} if there exists a signed permutation matrix $P$ such that $S_2 = \pm PS_1P^{\top}$. Two equivalent Seidel matrices have the same quasi-orthogonality index.

For an integer $n\geq 2$, we denote by ${ \rm ind}(n)$ the minimum quasi-orthogonality index among all $n\times n$ Seidel matrices. 

The two Seidel matrices of order 2 are orthogonal and then ${ \rm ind}(2)=0$. Up to equivalence, there is only one Seidel matrix of order 3, namely 
$$S=
\begin{pmatrix}
	0 & 1 & 1\\
	1 & 0 & 1\\
	1 & 1 & 0
\end{pmatrix}.$$ As seen in Example \ref{exampleordr3}, ${ \rm ind}(S)=2$, hence ${ \rm ind}(3)=2$.
Table \ref{tab:matrices_polynomials} lists all non equivalent $n\times n$ Seidel matrices having the minimum quasi-orthogonality index for $n\in\{4, 5, 6, 7\}$.

\begin{table}[!h]
	\caption{Non equivalent $n\times n$ Seidel matrices with minimum quasi-orthogonality index for $n\in\{4, 5, 6, 7\}$.}
	\begin{small}
		\begin{tabular}{|c|c|c|c|}
			\hline
			\textbf{Order $n$} & \textbf{${\rm ind}(n)$}&\textbf{Seidel matrix} & \textbf{Characteristic polynomial} \\
			\hline
			$4$& 2&
			$\begin{pmatrix*}[r]
				0 & 1 & -1 & 1 \\
				1 & 0 & 1 & 1 \\
				-1 & 1 & 0 & 1 \\
				1 & 1 & 1 & 0
			\end{pmatrix*}$
			&
			$(x^{2} - 1)(x^2 - 5)$\\
			\hline
			$5$& 1&
			$\begin{pmatrix*}[r]
				0 & 1 & -1 & -1 & 1 \\
				1 & 0 & 1 & -1 & 1 \\
				-1 & 1 & 0 & 1 & 1 \\
				-1 & -1 & 1 & 0 & 1 \\
				1 & 1 & 1 & 1 & 0
			\end{pmatrix*}$
			&
			$x(x^2 - 5)^2$\\
			\hline
			$6$ & 0
			&
			$\begin{pmatrix*}[r]
				0 & 1 & -1 & -1 & 1 & 1 \\
				1 & 0 & 1 & -1 & -1 & 1 \\
				-1 & 1 & 0 & 1 & -1 & 1 \\
				-1 & -1 & 1 & 0 & 1 & 1 \\
				1 & -1 & -1 & 1 & 0 & 1 \\
				1 & 1 & 1 & 1 & 1 & 0
			\end{pmatrix*}
			$
			&
			$(x^2 - 5)^3$
			\\
			\hline
			$7$ & 1 
			& 
			$
			\begin{pmatrix*}[r]
				0 & 1 & 1 & -1 & -1 & 1 & 1 \\
				1 & 0 & 1 & -1 & 1 & -1 & 1 \\
				1 & 1 & 0 & 1 & -1 & -1 & 1 \\
				-1 & -1 & 1 & 0 & 1 & 1 & 1 \\
				-1 & 1 & -1 & 1 & 0 & 1 & 1 \\
				1 & -1 & -1 & 1 & 1 & 0 & 1 \\
				1 & 1 & 1 & 1 & 1 & 1 & 0
			\end{pmatrix*}
			$ 
			& 
			$(x + 2)(x - 1)^2(x^{2} -9)^2$\\
			\hline
		\end{tabular}
	\end{small}
	\label{tab:matrices_polynomials}
\end{table}

Symmetric quasi-orthogonal Seidel matrices are known as symmetric conference matrices. The order $n$ of a symmetric conference matrix is of the form $4k+2$ for some integer $k$, moreover, $n - 1$ is the sum of two squares \cite{belevitch1950theorem}.
The first orders of symmetric conference matrices are $n = 2, 6, 10, 14, 18$. However, there is no conference matrix of order 22 (21 is not the sum of two squares).
Greaves and Suda \cite{greaves2017symmetric} prove that the existence of a symmetric conference matrix of order $4k + 2$ is equivalent to the existence of a symmetric Seidel matrix of order $4k + i$ having a prescribed spectrum for each $i \in \{1, 0, -1\}$. More precisely, they obtain the following result.
\begin{theorem}[\cite{greaves2017symmetric}]\label{suda}
	The existence of the following are equivalent: 
	\begin{enumerate}[i)]
		\item a symmetric Seidel matrix with characteristic polynomial $$(x^{2} - 4k - 1)^{2k+1};$$
		\item a symmetric Seidel matrix with characteristic polynomial $$x(x^{2} - 4k - 1)^{2k};$$
		\item a symmetric Seidel matrix with characteristic polynomial $$(x^{2} - 1)(x^{2} - 4k - 1)^{2k-1};$$
		\item  a symmetric Seidel matrix with characteristic polynomial $$(x-2)(x + 1)^{2}(x^{2} - 4k -1)^{2(k -1)}.$$
	\end{enumerate}
\end{theorem}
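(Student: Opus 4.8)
The plan is to show that each of (i)--(iv) is equivalent to the existence of a symmetric conference matrix of order $4k+2$. Note first that (i) \emph{is} such a matrix: a symmetric Seidel matrix with characteristic polynomial $(x^{2}-4k-1)^{2k+1}$ has order $4k+2$ and satisfies $S^{2}=(4k+1)I$, i.e. $S^{\top}S=(n-1)I$, so it is a symmetric conference matrix $C$ of spectral radius $\rho=\sqrt{4k+1}$. Moreover, the four matrices in (i)--(iv) have orders $4k+2,4k+1,4k,4k-1$ and, by Theorem \ref{index}, quasi-orthogonality indices $0,1,2,3$; hence a minimal symmetric quasi-orthogonal extension of any of them has order exactly $4k+2$. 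This already signals that the common extension should be the conference matrix $C$.

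The main tool will be a spectral identity for complementary principal submatrices. Writing $C$ in block form $C=\begin{pmatrix} P & B\\ B^{\top} & R\end{pmatrix}$ and expanding $C^{2}=\rho^{2}I$, the blocks give $P^{2}+BB^{\top}=\rho^{2}I$, $\ B^{\top}B+R^{2}=\rho^{2}I$ and $PB=-BR$. If $Rv=\mu v$ with $\mu\neq\pm\rho$, then $\|Bv\|^{2}=v^{\top}(\rho^{2}I-R^{2})v=(\rho^{2}-\mu^{2})\|v\|^{2}\neq 0$ and $P(Bv)=-B(Rv)=-\mu(Bv)$; thus $v\mapsto Bv$ is a bijection between the eigenspaces of $R$ and of $P$ for eigenvalues in $(-\rho,\rho)$, carrying $\mu$ to $-\mu$. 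In words: the eigenvalues of $P$ lying strictly between $-\rho$ and $\rho$ are exactly the negatives of those of $R$.

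For the forward implications (i)$\Rightarrow$(ii),(iii),(iv) I would delete a principal block. Every $j\times j$ principal submatrix $R$ of $C$ $(j\in\{1,2,3\})$ is itself a Seidel matrix, with interior spectrum $\{0\}$, $\{1,-1\}$, or (for an order-$3$ Seidel matrix) $\{2,-1,-1\}$ or $\{-2,1,1\}$. By the identity above the interior eigenvalues of the complementary block $P$, of order $4k+2-j$, are the negatives of those of $R$; since Theorem \ref{interlacing} confines all eigenvalues of $P$ to $[-\rho,\rho]$, the remaining eigenvalues equal $\pm\rho$, and they occur with equal multiplicity by the trace condition \eqref{trace} (each interior spectrum sums to $0$). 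Counting orders then yields exactly the characteristic polynomials in (ii), (iii) and (iv), where for (iv) one invokes that a Seidel matrix and its negative are equivalent to match $(x-2)(x+1)^{2}$ with $(x+2)(x-1)^{2}$.

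The reverse implications are the crux, and I expect the main difficulty there. Given $P$ as in (ii), (iii) or (iv), of order $m=4k+2-j$, I must produce a $j\times j$ Seidel block $R$ whose interior spectrum is the negative of that of $P$ and a $\{\pm1\}$ block $B$ with $P^{2}+BB^{\top}=\rho^{2}I$, $PB=-BR$ and $B^{\top}B+R^{2}=\rho^{2}I$; then $\begin{pmatrix}P&B\\B^{\top}&R\end{pmatrix}$ is the sought conference matrix. Setting $G:=\rho^{2}I-P^{2}=BB^{\top}$, one checks that $G$ is positive semidefinite of rank $j$, with image the span $W_{P}$ of the interior eigenvectors of $P$, and that every diagonal entry of $G$ equals $j$: indeed each row of $P$ has exactly $m-1=\rho^{2}-j$ nonzero $\pm1$ entries, so $(P^{2})_{ii}=\rho^{2}-j$. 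For $j=1$ this finishes the argument at once: $G=\mathbf{b}\mathbf{b}^{\top}$ with $G_{ii}=b_{i}^{2}=1$, so $\mathbf{b}\in\{\pm1\}^{m}$ automatically, $\mathbf{b}$ spans $\ker P$, and one takes $R=[0]$, $B=\mathbf{b}$. The obstacle is $j\in\{2,3\}$: one must factor the flat-diagonal Gram matrix $G$ as $BB^{\top}$ with $B$ a $\pm1$ matrix whose columns form an orthogonal frame inside the $2$- or $3$-dimensional space $W_{P}$ and intertwine $P$ with the Seidel block $R$. The flatness of the associated projection $\tfrac{1}{\rho^{2}-\theta^{2}}G$ is necessary but not sufficient for such a frame to exist, and the trace and second-moment data \eqref{trace}, \eqref{symmetricMatrix} alone do not pin the configuration down; here the finer combinatorial structure of the associated regular two-graph and its switching class, studied in \cite{greaves2017symmetric}, must be brought in. Establishing the existence of these $\pm1$ frames is the point I expect to be hardest.
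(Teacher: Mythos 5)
First, note that the paper does not prove this statement at all: it is quoted from Greaves and Suda \cite{greaves2017symmetric} and used as a black box (the paper only remarks afterwards that ``analyzing the proof of Theorem \ref{suda}'' shows the matrices of Theorem \ref{Seidel_thm} sit inside conference matrices), so there is no internal proof to compare yours against. Judged on its own terms, your proposal proves the easy half and openly leaves the hard half open. The forward implications are essentially sound: from $C^{2}=\rho^{2}I$ you correctly extract $PB=-BR$ and $\rho^{2}I-P^{2}=BB^{\top}$, conclude that the interior spectrum of the complementary block $P$ is the negative of that of the deleted $j\times j$ Seidel block $R$ (with matching multiplicities, since $v\mapsto Bv$ and $w\mapsto B^{\top}w$ are injective on interior eigenspaces), and then rank and trace counting pins down the characteristic polynomials in (ii)--(iv).

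The genuine gap is the set of reverse implications, which carry the entire content of the equivalence, and you say yourself that you cannot complete them. Producing a $\{\pm1\}$ matrix $B$ with $BB^{\top}=\rho^{2}I-P^{2}$ and $PB=-BR$ is automatic only for $j=1$, where the rank-one Gram matrix $G=\rho^{2}I-P^{2}$ has constant diagonal $1$ and hence factors as $bb^{\top}$ with $b\in\{\pm1\}^{m}$, and $PG=0$ follows from the minimal polynomial of $P$. For $j=2,3$ the constant diagonal of $G$ is necessary but far from sufficient; one must additionally exploit that the off-diagonal entries of $G$ are integers of known parity, bounded in absolute value by the diagonal because $G$ is positive semi-definite (so that, for instance, when $j=2$ one gets $G_{ij}\in\{0,\pm2\}$ and $\tfrac{1}{2}G$ is a rank-two $\{0,\pm1\}$ Gram matrix whose combinatorial structure produces the two orthogonal $\pm1$ columns of $B$, and similarly for $j=3$). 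This parity-plus-positivity analysis of $\rho^{2}I-S^{2}$ is precisely the device the paper itself deploys in the proof of Theorem \ref{Seidel_thm}, part (iv), and it is the substance of the Greaves--Suda argument; without it your write-up establishes only that (i) implies each of (ii)--(iv), not the asserted equivalence.
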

It follows from Theorem \ref{index} that the quasi-orthogonality index of the symmetric Seidel matrices mentioned in assertions $i)$, $ii)$, $iii)$ and $iv)$ of the above theorem are respectively equal to 0, 1, 2 and 3.		

Seidel matrices with quasi-orthogonality index 0 are conference matrices.
The following result gives the characteristic polynomials of symmetric Seidel matrices with quasi-orthogonality index 1.

\begin{proposition}\label{Seidel index 1}
	Let $S$ be an $n\times n$ Seidel matrix with quasi-orthogonality index equals $1$. Then $n$ is odd and its characteristic polynomial  $\phi_{S}=x(x^{2}-n)^{\frac{n-1}{2}}$.
\end{proposition}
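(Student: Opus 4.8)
The plan is to combine the index formula of Theorem \ref{index} with the two spectral identities \eqref{trace} and \eqref{symmetricMatrix} valid for every Seidel matrix. Writing $\rho$ for the spectral radius, the hypothesis ${\rm ind}(S)=1$ together with Theorem \ref{index} gives $\mu_{\rho}+\mu_{-\rho}=n-1$. Hence, counting multiplicities, exactly one eigenvalue $\theta$ of $S$ differs from $\pm\rho$; since $S$ is real symmetric (so $\theta\in\mathbb{R}$), $\rho$ is the spectral radius, and $\theta\neq\pm\rho$, we must have $|\theta|<\rho$. Thus the spectrum of $S$ is $\{[\rho]^{\mu_{\rho}},[-\rho]^{\mu_{-\rho}},[\theta]^{1}\}$, and it remains to pin down $\mu_{\rho}$, $\mu_{-\rho}$, $\theta$ and $\rho$.

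Next I would feed this spectrum into the identities. The zero-trace identity \eqref{trace} reads $(\mu_{\rho}-\mu_{-\rho})\rho+\theta=0$, so $\theta=(\mu_{-\rho}-\mu_{\rho})\rho$. The key step is then to note that $\mu_{-\rho}-\mu_{\rho}$ is an integer while $|\theta|<\rho$ forces $|\mu_{-\rho}-\mu_{\rho}|<1$; hence $\mu_{\rho}=\mu_{-\rho}$ and $\theta=0$. I expect this short integrality argument to be the crux of the proof: it is precisely where the Seidel structure (integer multiplicities, vanishing trace) meets the strict bound $|\theta|<\rho$ coming from the index-$1$ hypothesis.

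From $\mu_{\rho}=\mu_{-\rho}$ and $\mu_{\rho}+\mu_{-\rho}=n-1$ it follows that $n-1=2\mu_{\rho}$ is even, so $n$ is odd and $\mu_{\rho}=\mu_{-\rho}=(n-1)/2$. Finally, substituting $\theta=0$ into the sum-of-squares identity \eqref{symmetricMatrix} gives $(n-1)\rho^{2}=n(n-1)$, whence $\rho^{2}=n$ (positivity of $\rho$ needing only that a Seidel matrix is nonzero). Collecting the three eigenvalues, the characteristic polynomial factors as
$$\phi_{S}(x)=(x-\rho)^{(n-1)/2}(x+\rho)^{(n-1)/2}\,x=x\,(x^{2}-n)^{(n-1)/2},$$
which is the desired conclusion.
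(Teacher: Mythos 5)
Your proposal is correct and follows essentially the same route as the paper: both derive from Theorem \ref{index} that $S$ has a single simple eigenvalue $\theta$ with $|\theta|<\rho$, use the trace identity \eqref{trace} to force $\mu_{\rho}=\mu_{-\rho}$ and $\theta=0$ (your unified integrality argument $|\mu_{\rho}-\mu_{-\rho}|<1$ neatly subsumes the paper's separate treatment of the cases $\mu_{\pm\rho}=0$), and conclude $\rho^{2}=n$ from \eqref{symmetricMatrix}. No gaps.
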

\begin{proof}
	Let $\rho$ be the spectral radius of $S$. As ${\rm ind}(S)=1$ then $S$ has a simple eigenvalue $\alpha$ such that $\lvert \alpha \rvert<\rho$. 
	If $\mu_{-\rho}=0$ then by \eqref{trace} we have $(n-1)\rho + \alpha = 0$, which is a contradiction because $\lvert \alpha \rvert<\rho$. We obtain a similar contradiction if $\mu_{\rho}=0$.
	Now suppose that $\mu_{-\rho}\neq\mu_{\rho}$. By \eqref{trace} we get $(\mu_{\rho}-\mu_{-\rho})\rho + \alpha=0$, 
	which implies that $\lvert \mu_{\rho}-\mu_{-\rho}\rvert\rho=\lvert \alpha\rvert$. This is a contradiction. Then $\mu_{-\rho}=\mu_{\rho}$ and hence $\alpha=0$ and $n=2\mu_{\rho}+1$. By \eqref{symmetricMatrix} we have $\rho^{2}=n$.
	Then the characteristic polynomial $\phi_{S}(x)=x(x^{2}-n)^{\frac{n-1}{2}}$.
\end{proof}

For a given $n\geq4$, the following theorem provides a spectral characterization of $n\times n$ Seidel matrices with minimum quasi-orthogonality index.
\begin{theorem}\label{Seidel_thm}
	Let $S$ be a Seidel matrix of order $n\geq 4$. Then, we have the following statements
	\begin{enumerate}[i)]
		\item If $n=4k+2$, then ${\rm ind}(S)\geq0$. Equality holds if and only if $\phi_{S}(x)=(x^{2} - 4k - 1)^{2k+1}$.
		\item If $n=4k+1$, then ${\rm ind}(S)\geq1$. Equality holds if and only if $\phi_{S}(x)=x(x^{2} - 4k - 1)^{2k}.$
		\item If $n=4k$, then ${\rm ind}(S)\geq2$. Equality holds if and only if $\phi_{S}(x)=(x^{2} - 1)(x^{2} - 4k - 1)^{2k-1}$.
		\item If $n=4k-1$, then ${\rm ind}(S)\geq3$. Equality holds if and only if $\phi_{S}(x)=(x-2)(x + 1)^{2}(x^{2} - 4k -1)^{2(k -1)}$ or $\phi_{S}(x)=(x+2)(x - 1)^{2}(x^{2} - 4k -1)^{2(k -1)}$.
	\end{enumerate}
\end{theorem}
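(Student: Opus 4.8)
The plan is to read everything off the spectrum of $S$ through Theorem~\ref{index}. Set $p=\mu_{\rho}$, $q=\mu_{-\rho}$ and let $\alpha_1,\dots,\alpha_s$ be the eigenvalues with $|\alpha_i|<\rho$, so that $s=n-p-q={\rm ind}(S)$. Identities \eqref{trace} and \eqref{symmetricMatrix} then read $(p-q)\rho+\sum_i\alpha_i=0$ and $(p+q)\rho^2+\sum_i\alpha_i^2=n(n-1)$. The four ``if'' directions are immediate: in each displayed polynomial $\rho^2=4k+1$ and $\mu_{\rho}+\mu_{-\rho}$ equals $n,\,n-1,\,n-2,\,n-3$, so Theorem~\ref{index} returns $0,1,2,3$. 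The content is the lower bounds together with the claim that a minimiser has exactly the displayed polynomial.

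For the lower bounds I would eliminate small $s$ in turn. Since $|\alpha_i|<\rho$, the first identity gives $|p-q|<s$, while $p+q\equiv n\pmod2$; hence $s\ge1$ whenever $n$ is odd (cases ii, iv). If $s=0$ then $S$ is a symmetric conference matrix, forcing $n\equiv2\pmod4$ \cite{belevitch1950theorem} and excluding $n\equiv0$ (case iii). If $s=1$ then $p=q$, $\alpha_1=0$ and $\rho^2=n$ by Proposition~\ref{Seidel index 1}; after switching so the kernel vector is $\mathbf 1$ one gets $S^2=nI-J$ (with $J$ all-ones), and bordering $S$ by an all-ones row and column yields a symmetric conference matrix of order $n+1$, so $s=1$ needs $n\equiv1\pmod4$ and is impossible in case iv. Finally for $n=4k-1$ and $s=2$ the above forces $|p-q|=1$ and $\alpha_1+\alpha_2=\pm\rho$; a Galois argument shows $\rho^2\in\mathbb Z$ (an irrational $\rho\ne\sqrt m$ would force an interior conjugate of multiplicity $p>2$, and $\rho=\sqrt m$ would force $p=q$), after which the product $\alpha_1\alpha_2=(2k-1)(\rho^2-n)$ from the second identity, together with $\alpha_1\alpha_2>0$ and a nonnegative discriminant, squeezes $4k-1<\rho^2<4k$, which is absurd. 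This yields ${\rm ind}(S)\ge3$ and hence all four bounds.

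For equality I fix $s$ at the minimum and determine the spectrum. The same Galois argument forces $\rho^2\in\mathbb Z$ and (in the odd cases) $p=q$, so $\sum_i\alpha_i=0$; the second identity then expresses $\sum_i\alpha_i^2$ through $\rho^2$, and $0\le\sum_i\alpha_i^2<s\rho^2$ confines $\rho^2$ to a few integers near $4k+1$. Since $\prod_i(x-\alpha_i)\in\mathbb Z[x]$, reducing $\phi_S$ modulo $2$ is decisive: from $S\equiv J-I$ one gets $\phi_S\equiv(x+1)^n$ for $n$ even and $\phi_S\equiv x(x+1)^{n-1}$ for $n$ odd, and this congruence kills every value of $\rho^2$ except $4k+1$. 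Cases i--iii are then settled (in case iii it rules out the competing interior factor $x^2-\tfrac n2$, leaving $x^2-1$).

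The main obstacle is case iv. After the above, the interior factor is a cubic $x^3-3x-e_3$ with three real roots, so $|e_3|\le2$, and the mod-$2$ reduction forces $e_3$ even, leaving $e_3\in\{-2,0,2\}$. The honest values $e_3=\pm2$ give $(x\mp2)(x\pm1)^2$, but the spurious $e_3=0$ (interior eigenvalues $0,\pm\sqrt3$) shares the first two moments and the same reduction mod $2$, so none of the preceding tools removes it. The resolution, which I expect to be the crux, is the third moment: $\operatorname{tr}(S^3)=6\bigl(2\Delta_+-\binom n3\bigr)$, where $\Delta_+$ counts the triples $\{i,j,l\}$ with $S_{ij}S_{il}S_{jl}=+1$, so $\operatorname{tr}(S^3)\equiv6\binom n3\pmod{12}$. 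For $n=4k-1\equiv3\pmod4$ the binomial $\binom n3$ is odd (Lucas' theorem), whence $\operatorname{tr}(S^3)\equiv6\pmod{12}$; but $\operatorname{tr}(S^3)=\sum_i\alpha_i^3=3e_3$, which is $0$ for $e_3=0$ and $\pm6$ for $e_3=\pm2$. Thus $e_3=0$ is excluded and only the two displayed polynomials survive. The remaining steps are routine moment bookkeeping; the genuine difficulty is this third-moment parity.
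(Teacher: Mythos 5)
Your overall skeleton matches the paper's (translate everything to the spectrum via Theorem~\ref{index}, then use the identities \eqref{trace}--\eqref{symmetricMatrix}, integrality of $\rho^2$, and congruences), but you replace the paper's central arithmetic tool, Proposition~\ref{det} ($\det S\equiv 1-n\pmod 4$), with three substitutes: the bordering of an index-$1$ Seidel matrix into a conference matrix of order $n+1$ (valid, and a nice alternative to the paper's determinant argument for excluding ${\rm ind}=1$ when $n\equiv 3\pmod 4$), the reduction $\phi_S\equiv\phi_{J-I}\pmod 2$, and the third-moment congruence $\operatorname{tr}(S^3)\equiv 6\binom n3\pmod{12}$. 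The last of these does correctly eliminate $e_3=0$ in case iv), which you rightly identify as a crux; the paper gets the same conclusion more directly from $\det S=\alpha\beta\gamma\rho^{4k-4}\equiv 2\pmod 4$.

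However, there are two genuine gaps. First, your claim that the mod-$2$ congruence ``kills every value of $\rho^2$ except $4k+1$'' fails in case iv): with $p=q$ and $\rho^2=4k-1$ the candidate polynomial reduces to $(x^3+x+e_3)(x+1)^{4k-4}$, which agrees with the target $x(x+1)^{4k-2}$ whenever $e_3$ is even, so nothing is excluded. This is exactly the point where the paper deploys its heaviest sub-argument, showing that $B=(4k-1)I-S^2$ would be a positive semi-definite rank-$3$ matrix with unit diagonal and $\pm1$ off-diagonal entries, whose $3\times3$ principal minors force rank $\le 2$. (Your configuration is in fact impossible for a cheaper reason you do not give: when $e_1=0$ and $\rho^2=4k-1$ the interior cubic is $x(x^2-\rho^2)-e_3$, which takes the value $-e_3$ at both $x=\rho$ and $x=-\rho$, incompatible with all three roots lying strictly in $(-\rho,\rho)$; but as written your argument has a hole here.) Second, your assertion that the Galois argument forces $p=q$ in the odd cases is only valid when $\rho$ is irrational; when $\rho\in\mathbb Z$ (which is exactly what your own argument produces when the top multiplicity is uniquely attained) nothing prevents $|p-q|=2$ in case iv), and this sub-case needs its own elimination (the paper uses the determinant congruence to force $\rho$ odd, hence $\rho^2=4k+1$, and then Cauchy--Schwarz against $\alpha^2+\beta^2+\gamma^2=6$). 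Minor further points: ``$\alpha_1\alpha_2>0$'' in your $s=2$ analysis is asserted rather than derived (if $\alpha_1\alpha_2=0$ then $\{\alpha_1,\alpha_2\}=\{0,\pm\rho\}$, contradicting $|\alpha_i|<\rho$, so it can be fixed), and several of your inequalities pinning $\rho^2$ near $4k+1$ require $n\ge 8$; the paper disposes of $n\in\{4,5,6,7\}$ separately via Table~\ref{tab:matrices_polynomials}, and you should too.
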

To prove this theorem, we need the following results.
\begin{lemma}\label{integercoefficient}
	Let $P(x)=(x-a_{1})^{\alpha_{1}}(x-a_{2})^{\alpha_{2}}\dots(x-a_{r})^{\alpha_{r}}$ be an integral polynomial where $a_{1},\dots, a_{r}$ are distinct real numbers and $\alpha_{1}\geq\alpha_{2}\geq\dots\geq\alpha_{r}$. If $\alpha_{1}=\alpha_{2}=\dots=\alpha_{k}>\alpha_{k+1}$ then $(x-a_{1})(x-a_{2})\dots(x-a_{k})$ is an integral polynomial.
	
\end{lemma}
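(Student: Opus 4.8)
The plan is to single out the factor $Q(x):=(x-a_{1})(x-a_{2})\cdots(x-a_{k})$ — the product over exactly the roots of maximal multiplicity $m:=\alpha_{1}=\cdots=\alpha_{k}$ — and to show that it is not merely a rational but an integral polynomial. The conceptual heart is that, since the coefficients of $P$ are rational, the top multiplicity $m$ is shared by all Galois conjugates of any root attaining it; consequently the set $\{a_{1},\dots,a_{k}\}$ of roots of multiplicity $m$ is stable under the action of $\mathrm{Gal}(\overline{\mathbb{Q}}/\mathbb{Q})$, and this stability is exactly what forces $Q$ to have rational coefficients.

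First I would establish that conjugate roots have equal multiplicity. For $\sigma\in\mathrm{Gal}(\overline{\mathbb{Q}}/\mathbb{Q})$, applying $\sigma$ to the coefficients of $P=\prod_{i}(x-a_{i})^{\alpha_{i}}$ produces $\prod_{i}(x-\sigma(a_{i}))^{\alpha_{i}}$, and this equals $P$ because $P\in\mathbb{Z}[x]\subseteq\mathbb{Q}[x]$ is fixed by $\sigma$; by unique factorization in $\overline{\mathbb{Q}}[x]$ the multiplicity of $\sigma(a_{i})$ in $P$ is again $\alpha_{i}$. Hence $\sigma$ permutes the roots of $P$ while preserving multiplicities, so it maps the maximal-multiplicity set $\{a_{1},\dots,a_{k}\}$ onto itself and therefore fixes $Q$, merely permuting its linear factors. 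Since this holds for every $\sigma$, we conclude $Q\in\mathbb{Q}[x]$.

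Next I would upgrade rationality to integrality. Because $P$ is monic with integer coefficients, each root $a_{i}$ is an algebraic integer; as the coefficients of $Q$ are, up to sign, the elementary symmetric functions of $a_{1},\dots,a_{k}$, they are algebraic integers as well, the ring of algebraic integers being closed under addition and multiplication. A rational number that is an algebraic integer is an ordinary integer, so every coefficient of $Q$ lies in $\mathbb{Z}$, which gives $Q\in\mathbb{Z}[x]$. An equivalent route I would keep in reserve is to factor $P=\prod_{j}g_{j}^{e_{j}}$ into monic irreducible integer polynomials via Gauss's lemma; in characteristic $0$ each $g_{j}$ is separable, so every root of $g_{j}$ has multiplicity exactly $e_{j}$ in $P$, whence $Q=\prod_{j:\,e_{j}=m}g_{j}$ is visibly integral. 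One may also recognize $Q$ as the monic $\gcd(P,P',\dots,P^{(m-1)})$, which lies in $\mathbb{Q}[x]$.

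The only genuine obstacle is the step showing $Q\in\mathbb{Q}[x]$: one must rule out that a root of maximal multiplicity has a conjugate of strictly smaller multiplicity, and this is precisely where the hypothesis $\alpha_{k}>\alpha_{k+1}$ enters, since it guarantees that $\{a_{1},\dots,a_{k}\}$ is the \emph{entire} top multiplicity level and hence a Galois-stable set. Everything after that is the routine principle that a monic polynomial with rational coefficients all of which are algebraic integers must have integer coefficients.
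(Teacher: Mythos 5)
Your proof is correct and follows essentially the same route as the paper, whose one-line argument rests on exactly the fact you establish: roots sharing a minimal polynomial (i.e.\ Galois conjugates) have equal multiplicity, so the top-multiplicity set is conjugation-stable and the corresponding product of linear factors is a monic rational polynomial with algebraic-integer coefficients, hence integral. You simply spell out the details the paper leaves implicit.
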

\begin{proof}
	The result follows from the fact that if $a_{i}$ and $a_{j}$ share the same minimal polynomial then $\alpha_{i}=\alpha_{j}$.
\end{proof}
The next proposition gives a useful property about the determinant of symmetric Seidel matrices.
\begin{proposition}\cite[Corollary 3.6]{greaves2016equiangular}\label{det}
	Let $S$ be a symmetric Seidel matrix of order $n$. Then  $\det(S) \equiv 1 - n \pmod 4$.
\end{proposition}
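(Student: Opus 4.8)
The plan is to reduce the determinant modulo $4$ by writing $S$ as a rank-one perturbation of a matrix whose diagonal carries all the odd entries. Write $J$ for the all-ones matrix and $\mathbf{1}$ for the all-ones column vector. Since every off-diagonal entry of $S$ is $\pm 1$ and the diagonal is zero, there is a symmetric $\{0,1\}$-matrix $A$ with zero diagonal (the adjacency matrix of the graph recording the positions of the $-1$'s) such that $S = J - I - 2A$. Setting $B := -I - 2A$, we have $S = B + \mathbf{1}\mathbf{1}^{\top}$, so the matrix determinant lemma gives $\det(S) = \det(B) + \mathbf{1}^{\top}\operatorname{adj}(B)\,\mathbf{1}$, where $\operatorname{adj}(B)$ is the adjugate. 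I would then analyze the two summands separately modulo $4$.

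The first summand is straightforward: in the Leibniz expansion of $\det(B)$ the diagonal entries $B_{kk}=-1$ are the only odd entries, every off-diagonal entry being $-2A_{ij}$. The identity permutation contributes $(-1)^{n}$, while any other permutation has at least two non-fixed points and hence collects at least two even factors, so its contribution is divisible by $4$. Thus $\det(B)\equiv(-1)^{n}\pmod 4$. Moreover, since reduction modulo $2$ commutes with the adjugate (a polynomial in the entries with integer coefficients) and $B\equiv I\pmod 2$, we get $\operatorname{adj}(B)\equiv\operatorname{adj}(I)=I\pmod 2$.

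The heart of the argument is to pin down $T:=\mathbf{1}^{\top}\operatorname{adj}(B)\,\mathbf{1}$ modulo $4$ without evaluating individual cofactors, by exploiting the identity $B\operatorname{adj}(B)=\det(B)\,I$ sandwiched between $\mathbf{1}^{\top}$ and $\mathbf{1}$:
\[
n\det(B)=\mathbf{1}^{\top}B\operatorname{adj}(B)\,\mathbf{1}=(B\mathbf{1})^{\top}\operatorname{adj}(B)\,\mathbf{1},
\]
using that $B$ is symmetric. As $(B\mathbf{1})_{i}=-1-2d_{i}$, with $d_{i}$ the degree of vertex $i$, this reads $n\det(B)=-T-2\,d^{\top}\operatorname{adj}(B)\,\mathbf{1}$. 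Now $\operatorname{adj}(B)\,\mathbf{1}\equiv\mathbf{1}\pmod 2$ by the previous step, so $d^{\top}\operatorname{adj}(B)\,\mathbf{1}\equiv d^{\top}\mathbf{1}=\sum_{i}d_{i}=2|E|\equiv 0\pmod 2$, whence $2\,d^{\top}\operatorname{adj}(B)\,\mathbf{1}\equiv 0\pmod 4$. Therefore $T\equiv-n\det(B)\pmod 4$.

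Combining the pieces, $\det(S)=\det(B)+T\equiv(1-n)\det(B)\equiv(1-n)(-1)^{n}\pmod 4$, and a one-line parity check finishes it: $(1-n)(-1)^{n}$ and $1-n$ agree when $n$ is even, and differ by $2(n-1)\equiv 0\pmod 4$ when $n$ is odd, so $\det(S)\equiv 1-n\pmod 4$. I expect the main obstacle to be precisely the third step, controlling the adjugate sum $T$ modulo $4$: the naive route of expanding the $n^{2}$ cofactors and tracking their signs is painful, whereas feeding the cheap congruence $\operatorname{adj}(B)\equiv I\pmod 2$ into the exact identity $B\operatorname{adj}(B)=\det(B)\,I$ upgrades that mod-$2$ information to the required mod-$4$ value of $T$ in one stroke.
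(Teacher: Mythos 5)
Your proof is correct, but there is nothing in the paper to compare it against: the paper does not prove this proposition at all, it imports it as a black box from \cite[Corollary 3.6]{greaves2016equiangular}. So your argument is genuinely new relative to the paper, and I checked it step by step. The decomposition $S=B+\mathbf{1}\mathbf{1}^{\top}$ with $B=-I-2A$ is exact; the adjugate form of the matrix determinant lemma, $\det(B+\mathbf{1}\mathbf{1}^{\top})=\det(B)+\mathbf{1}^{\top}\operatorname{adj}(B)\,\mathbf{1}$, is a polynomial identity and so holds even when $B$ is singular; the Leibniz argument for $\det(B)\equiv(-1)^{n}\pmod 4$ is sound, since every non-identity permutation contributes at least two even off-diagonal factors; $\operatorname{adj}(B)\equiv I\pmod 2$ follows because the adjugate commutes with reduction modulo $2$ and $B\equiv I$; and the key step is exactly where you said the difficulty lies --- sandwiching $B\operatorname{adj}(B)=\det(B)I$ between $\mathbf{1}^{\top}$ and $\mathbf{1}$, writing $B\mathbf{1}=-\mathbf{1}-2d$, and using the handshake identity $d^{\top}\mathbf{1}=2\lvert E\rvert$ to see that $2\,d^{\top}\operatorname{adj}(B)\,\mathbf{1}\equiv 0\pmod 4$, which gives $T\equiv -n\det(B)\pmod 4$. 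The final assembly $\det(S)\equiv(1-n)(-1)^{n}\equiv 1-n\pmod 4$ (using that $2(n-1)\equiv 0\pmod 4$ for odd $n$) is also right. For comparison, the cited source of Greaves, Koolen, Munemasa and Sz\"oll\H{o}si obtains the congruence as a consequence of a stronger invariance statement about characteristic polynomials of Seidel matrices modulo $4$, which is more information than is needed here; your route is narrower in scope but completely elementary and self-contained (rank-one update plus mod-$2$/mod-$4$ bookkeeping), and it could be inserted verbatim to make Section \ref{sectionSeidel} independent of the external reference.
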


\begin{proof}[Proof of Theorem \ref{Seidel_thm}]
	For $n\in\{4, 5, 6, 7\}$, the proof follows from Table $\ref{tab:matrices_polynomials}$. Now, we assume that $n\geq8$.
	\begin{enumerate}[i)]
		\item If ${\rm ind}(S)=0$, then $S$ is quasi-orthogonal and hence $\phi_{S}(x)=(x^{2}-4k-1)^{2k+1}$.
		\item If ${\rm ind}(S)=0$, then $S$ is quasi-orthogonal which implies that $n$ is even, a contradiction. Hence ${\rm ind}(S)\geq1$.	
		If ${\rm ind}(S)=1$, then by Proposition \ref{Seidel index 1}, we have  $\phi_{S}(x)=x(x^{2}-4k-1)^{2k}$. 
		
		\item If ${\rm ind}(S)=0$, then $S$ is quasi-orthogonal. Hence $S$ is a symmetric conference matrix. This is a contradiction because $n\not\equiv2\Mod{4}$. Consequently ${\rm ind}(S)\geq1$.
		
		If ${\rm ind}(S)=1$, then by Proposition \ref{Seidel index 1}, $n$ is odd which is a contradiction. Then ${\rm ind}(S)>1$.

		If ${\rm ind}(S)=2$ then $\phi_{S}(x)=(x-\alpha)(x-\beta)(x-\rho)^{\mu_{\rho}}(x+\rho)^{\mu_{-\rho}}$ where $\lvert \alpha \rvert<\rho$ and $\lvert \beta \rvert<\rho$. By \eqref{trace} we have $\alpha+\beta+(\mu_{\rho}-\mu_{-\rho})\rho=0$. It follows that $\lvert \mu_{\rho}-\mu_{-\rho}\rvert<2$. Moreover, $\mu_{\rho}+\mu_{-\rho}=4k-2$ then $\mu_{-\rho}$ and $\mu_{\rho}$ have the same parity. Hence $\mu_{\rho}=\mu_{-\rho}$ and $\beta=-\alpha$. 
		By \eqref{symmetricMatrix}, we get
		\begin{equation}
			\alpha^{2}+(2k-1)\rho^{2}=2k(4k-1)\label{eq6}
		\end{equation}
		Then
		\begin{align*}
			(2k-1)\rho^{2}\leq2k(4k-1).
		\end{align*}
		Hence
		\begin{equation*}
			\rho^{2}\leq 4k+1+ \dfrac{1}{2k-1}.
		\end{equation*}
		By Lemma \ref{integercoefficient}, we have $\alpha^{2}$ and $\rho^{2}$ are integers. As $n \geq 8$, we get $k\geq 2$ and hence 
		\begin{equation}
			\rho^{2}\leq4k+1.
		\end{equation}
		Since $\lvert \alpha \rvert<\rho$, equality \eqref{eq6} implies that
		$$2k\rho^{2}>2k(4k-1).$$
		Then
		$$\rho^{2}>4k-1.$$
		We cannot have $\rho^{2}=4k$, otherwise $\alpha^{2}=2k$ and then $\det(S)=2k(4k)^{2k-1}$ which contradicts Proposition \ref{det}. Therefore, $\rho^{2}=4k+1$ and consequently $\alpha^{2}=1$.
		Hence, $\phi_{S}(x)=(x^{2} - 1)(x^{2} - 4k - 1)^{2k-1}$.
		\item As $n$ is odd, $S$ cannot be quasi-orthogonal. Therefore, ${\rm ind}(S)\geq1$.
		
		If ${\rm ind}(S)=1$, then by Proposition \ref{Seidel index 1}, ${\rm det}(S)=0$. However, Proposition \ref{det} implies that ${\rm det}(S)\equiv 2 \pmod 4$. This is a contradiction. Consequently, ${\rm ind}(S)\geq2$.
		
		If ${\rm ind}(S)=2$ then $\phi_{S}(x)=(x-\alpha)(x-\beta)(x-\rho)^{\mu_{\rho}}(x+\rho)^{\mu_{-\rho}}$ where $\lvert \alpha \rvert<\rho$ and $\lvert \beta \rvert<\rho$. By \eqref{symmetricMatrix},  we have
		\begin{equation}\label{eq10}
			\alpha^{2}+\beta^{2}+(4k-3)\rho^{2}=(4k-1)(4k-2).
		\end{equation}
		Then
		$$(4k-1)\rho^{2}>(4k-1)(4k-2).$$
		Hence
		$$\rho^{2}>4k-2.$$
		Moreover
		$$(4k-3)\rho^{2}\leq(4k-1)(4k-2).$$
		Thus $$\rho^{2}\leq 4k+\dfrac{2}{4k-3}.$$
		Moreover, $\mu_{-\rho}$ and $\mu_{\rho}$ have different parity because $\mu_{\rho}+\mu_{-\rho}=4k-3$. It follows from Lemma \ref{integercoefficient} that $\rho$ is an integer. Since $n\geq 8$, we have $k\geq 3$ and hence
		\begin{equation}\label{eq11}
			4k-1\leq\rho^{2}\leq4k.
		\end{equation}		 
		By \eqref{trace} we have $\alpha+\beta+(\mu_{\rho}-\mu_{-\rho})\rho=0$. Thus $\lvert \mu_{\rho}-\mu_{-\rho}\rvert<2$. Hence $\lvert \mu_{\rho}-\mu_{-\rho}\rvert=1$ and $\alpha+\beta=\pm\rho$. Then $(\alpha+\beta)^{2}=\rho^{2}$. Using \eqref{eq10}, we have $\alpha\beta=(2k-1)(\rho^{2}-4k+1)$. Consequently $\alpha\beta$ is an integer. 
		From Proposition \ref{det}, we have $\det(S)=\pm\alpha\beta\rho^{4k-3}\equiv 2\pmod4$. Then $\rho$ must be odd, otherwise $\rho^{2}\equiv 0\pmod4$ and ${\rm det}(S)\equiv0\pmod4$. By \eqref{eq11} we get $\rho^{2}=4k-1$. Then $\alpha\beta=0$ and ${\rm det}(S)=0$, a contradiction. Hence ${\rm ind}(S)\geq 3$.
		
		If ${\rm ind}(S)=3$, then $\phi_{S}(x)=(x-\alpha)(x-\beta)(x-\gamma)(x-\rho)^{\mu_{\rho}}(x+\rho)^{\mu_{-\rho}}$ where $\lvert \alpha \rvert<\rho$, $\lvert \beta \rvert<\rho$ and $\lvert \gamma \rvert<\rho$. By \eqref{symmetricMatrix}, we get 
		\begin{equation}\label{eq12}
			\alpha^{2}+\beta^{2}+\gamma^{2}+(4k-4)\rho^{2}=(4k-1)(4k-2).
		\end{equation}
		This implies 
		$$ 4k-2<\rho^{2}\leq\frac{(4k-1)(4k-2)}{4k-4}.$$
		Then
		\begin{equation}\label{eq15}
			4k-1\leq\rho^{2}\leq4k+1 +\frac{6}{4k-4}.
		\end{equation}
		
		By \eqref{trace} we have $\alpha+\beta+\gamma+(\mu_{\rho}-\mu_{-\rho})\rho=0$. It follows that $\lvert \mu_{\rho}-\mu_{-\rho}\rvert<3$. In addition, $\mu_{\rho}+\mu_{-\rho}=4k-4$ then $\mu_{-\rho}$ and $\mu_{\rho}$ have the same parity. Hence $\rvert\mu_{\rho}-\mu_{-\rho}\rvert\in\{0,2\}$. 
		
		We will prove that $\mu_{\rho}=\mu_{-\rho}$.
		Suppose, for contradiction, that $\rvert\mu_{\rho}-\mu_{-\rho}\rvert=2$. By Lemma \ref{integercoefficient}, $\rho$ is an integer. Moreover, $\phi_{S}(x)$ is an integral polynomial. Then $(x-\alpha)(x-\beta)(x-\gamma)$ is an integral polynomial. Hence, $\alpha\beta\gamma$ is an integer. 
		Since $n\geq 8$, we have $k\geq 3$. By \eqref{eq15}, we get $\rho^{2}\in\{4k-1, 4k, 4k+1\}$. From Proposition \ref{det}, ${\rm det}(S)=\pm\alpha\beta\gamma\rho^{4k-4}\equiv2\pmod4$.
		Thus $\rho$ is odd. Hence $\rho^{2}=4k+1$. By \eqref{eq12}, we get 
		\begin{equation}\label{eq13}
			\alpha^{2}+\beta^{2}+\gamma^{2}=6.	
		\end{equation}
		By Cauchy inequality applied to $\begin{pmatrix}
			\alpha\\
			\beta \\
			\gamma
		\end{pmatrix}$
		and $\begin{pmatrix}
			1\\
			1\\
			1
		\end{pmatrix}$
		we get $\rvert\alpha+\beta+\gamma\rvert\leq 3\sqrt{2}$. But from \eqref{trace}, we have $\rvert\alpha+\beta+\gamma\lvert=2\rho=2\sqrt{4k+1}$. This is a contradiction because $k\geq3$. Hence $\mu_{-\rho}=\mu_{\rho}$.
		
		By \ref{trace}, we have 
		\begin{equation}\label{eq14}
			\alpha+\beta+\gamma=0.
		\end{equation}
		By Lemma \ref{integercoefficient}, $\rho^{2}$ is an integer. Moreover, $\phi_{S}(x)$ is an integral polynomial. Then $(x-\alpha)(x-\beta)(x-\gamma)$ is an integral polynomial. Hence, $\alpha\beta\gamma$ is an integer. 
		From Proposition \ref{det}, ${\rm det}(S)=\alpha\beta\gamma\rho^{4k-4}\equiv2\pmod4$.
		Thus $\rho^{2}$ is odd. Then, by \eqref{eq15}, we have $\rho^{2}\in\{4k-1, 4k+1\}$.
		
		We will prove that $\rho^{2}=4k+1$.
		Suppose, for contradiction, that $\rho^{2}=4k-1$. Let $S := [s_{ij}]$ and $B := (4k - 1)I - S^2 = [b_{ij}]$. The matrix $B$ is symmetric and its eigenvalues are $(4k - 1) - \alpha^2$, $(4k - 1) - \beta^2$, $(4k - 1) - \gamma^2$ and $0$. The first three eigenvalues are positive and the multiplicity of $0$ is $4k-4$.
		Thus $B$ is a positive semi-definite matrix with rank equal to 3.
		Moreover, for $i\in\{1, 2,\dots,n\}$ we have
		\[
		b_{ii} = 4k - 1 - (s_{i1}^2 + \ldots + s_{in}^2) = 1.
		\]
		Since $B$ is positive semi-definite, for $i\neq j\in\{1, 2,\dots,n\}$ we have
		\[
		\det \begin{pmatrix}
			1 & b_{ij} \\
			b_{ij} & 1
		\end{pmatrix} \geq 0.
		\]
		Then, $ b_{ij} \in \{0, 1, -1\} $.
		We also have
		\[
		b_{ij} = -\sum_{k=1}^n s_{ik} s_{kj} = -\sum_{k \neq i,k \neq j} s_{ik} s_{kj}.\]
		As the off-diagonal entries of $S$ are $\pm1$, $b_{ij}\equiv 4k - 3 \equiv 1\pmod 2$ and hence $ b_{ij} \in \{1, -1\} $.
		It is easy to check that the determinant of a $3\times 3$ symmetric matrix with $1$'s on the diagonal and $\pm1$ off-diagonal is equal to $0$ or $-4$.
		Moreover, the matrix $B$ is positive semi-definite, then its principal minors of order $3$ must equal to $0$. Then, the rank of $B$ is at most $2$, which leads to a contradiction. Hence $\rho^{2}=4k+1$.
		
		By \eqref{eq12}, we get 
		\begin{equation}\label{eq16}
			\alpha^{2}+\beta^{2}+\gamma^{2}=6.
		\end{equation}
		Moreover, we have $\alpha^{2}\beta^{2}\gamma^{2}\leq(\frac{\alpha^{2}+\beta^{2}+\gamma^{2}}{3})^{3}$. Then $\rvert\alpha\beta\gamma\lvert\leq2$. Since $\alpha\beta\gamma$ is an integer and $\det(S)=\alpha\beta\gamma\rho^{4k-4}\equiv2\pmod{4}$, then $\alpha\beta\gamma\in\{-2,2\}$.
		
		case 1 : $\alpha\beta\gamma=2$, together with the above equations \eqref{eq16} and \eqref{eq14}, we find that $\{\alpha, \beta, \gamma\} = \{[2]^{1}, [-1]^{2}\}$. Hence, $\phi_{S}(x)=(x-2)(x + 1)^{2}(x^{2} - 4k -1)^{2(k -1)}$.
		
		case 2 : $\alpha\beta\gamma=-2$, we get $\{\alpha, \beta, \gamma\} = \{[-2]^{1}, [1]^{2}\}$. Hence, $\phi_{S}(x)=(x+2)(x - 1)^{2}(x^{2} - 4k -1)^{2(k -1)}$.
	\end{enumerate}
	The converse of the four assertions follows from Theorem \ref{index}.
\end{proof}

Analyzing the proof of Theorem \ref{suda}, one can see that the matrices in Theorem \ref{Seidel_thm} are principal sub-matrices of a conference matrix.

\bibliography{symmetric}

\begin{thebibliography}{10}

\bibitem{belevitch1950theorem}
V.~Belevitch.
\newblock Theorem of $2n$-terminal networks with application to conference
  telephony.
\newblock {\em Electrical Communication}, 27:231--244, 1950.

\bibitem{bollobas1981graphs}
B{\'e}la Bollob{\'a}s and Andrew Thomason.
\newblock Graphs which contain all small graphs.
\newblock {\em European Journal of Combinatorics}, 2(1):13--15, 1981.

\bibitem{boussairi2024quasi}
Abderrahim Boussa{\"\i}ri, Brahim Chergui, Zaineb Sarir, and Mohamed Zouagui.
\newblock Quasi-orthogonal extension of skew-symmetric matrices.
\newblock {\em arXiv preprint arXiv:2410.19594}, 2024.

\bibitem{ch2007handbook}
Colbourn Ch and J~Dinitz.
\newblock {\em Handbook of Combinatorial Designs, (Discrete mathematics and its
  applications, ser. ed. K. Rosen)}.
\newblock CRC Press, Boca Raton, FL, 2007.

\bibitem{graham1971constructive}
Ronald~L Graham and Joel~H Spencer.
\newblock A constructive solution to a tournament problem.
\newblock {\em Canadian Mathematical Bulletin}, 14(1):45--48, 1971.

\bibitem{greaves2017symmetric}
G.~Greaves and S.~Suda.
\newblock Symmetric and skew-symmetric $\{0,\pm 1\}$-matrices with large
  determinants.
\newblock {\em Journal of Combinatorial Designs}, 25(11):507--522, 2017.

\bibitem{greaves2016equiangular}
Gary Greaves, Jacobus~H Koolen, Akihiro Munemasa, and Ferenc
  Sz{\"o}ll{\H{o}}si.
\newblock Equiangular lines in euclidean spaces.
\newblock {\em Journal of Combinatorial Theory, Series A}, 138:208--235, 2016.

\bibitem{haantjes1948equilateral}
Johannes Haantjes.
\newblock Equilateral point-sets in elliptic two-and three-dimensional spaces.
\newblock {\em Nieuw Arch. Wiskunde (2)}, 22:355--362, 1948.

\bibitem{koukouvinos1999new}
Christos Koukouvinos and Jennifer Seberry.
\newblock New weighing matrices and orthogonal designs constructed using two
  sequences with zero autocorrelation function--a review.
\newblock {\em Journal of statistical planning and inference}, 81(1):153--182,
  1999.

\bibitem{taussky1971sums}
Olga Taussky.
\newblock sums of squares and hadamard matrices.
\newblock {\em Combinatorics (Proc. Sympos. Pure Math., Vol. XIX, Univ.
  California, Los Angeles, Calif., 1968)}, pages 229--233, 1971.

\bibitem{van1991equilateral}
Jacobus~H van Lint and Johan~J Seidel.
\newblock Equilateral point sets in elliptic geometry.
\newblock In {\em Geometry and Combinatorics}, pages 3--16. Elsevier, 1991.

\end{thebibliography}

\end{document}